\newcommand{\Vect}{{\rm Vect}}
\newcommand{\FinVect}{{\rm FinVect}}
\newcommand{\Rep}{{\rm Rep}}
\newcommand{\Ext}{{\rm Ext}}
\newcommand{\Hom}{{\rm Hom}}
\newcommand{\im}{{\rm im}}
\renewcommand{\to}{\rightarrow}
\newcommand{\ten}{\otimes }
\newcommand{\maps}{\colon}
\newcommand{\g}{{\mathfrak g}}
\newcommand{\R}{{\mathbb R}}
\newcommand{\C}{{\mathbb C}}
\newcommand{\F}{{\mathbb F}}
\newcommand{\Z}{\ensuremath{\mathbb{Z}}\xspace}
\newcommand{\Hall}{\mathcal{H}}
\newcommand{\Aut}{{\rm Aut}}
\newcommand{\aut}{{\rm aut}}
\newcommand{\comment}[1]{}
\renewcommand{\u}[1]{\underline{#1}}
\newtheorem{theorem}{Theorem}
\newtheorem{definition}[theorem]{Definition}
\newtheorem{lemma}[theorem]{Lemma}
\newtheorem{proposition}[theorem]{Proposition}
\newtheorem*{theorem*}{Theorem}
\newtheorem*{definition*}{Definition}
\newtheorem*{lemma*}{Lemma}
\newtheorem*{corollary*}{Corollary}
\newtheorem*{proposition*}{Proposition}
\newtheorem*{example*}{Example}
\newtheorem*{conjecture*}{Conjecture}
\newtheorem*{remark*}{Remark}
\newtheorem*{notation*}{Notation}
\newtheorem*{convention*}{Convention}
\begin{document}
\sloppy
\title{Hall Algebras as Hopf Objects}
\author{Christopher D.\ Walker\\
       Department of Mathematics, University of California\\
       Riverside, CA 92521 USA}
\maketitle

\begin{abstract}
One problematic feature of Hall algebras is the fact that the standard multiplication and comultiplication maps do not satisfy the bialgebra compatibility condition in the underlying symmetric monoidal category $\Vect$. In the past this problem has been resolved by working with a weaker structure called a `twisted' bialgebra. In this paper we solve the problem differently by first switching to a different underlying category $\Vect^K$ of vector spaces graded by a group $K$ called the Grothendieck group. We equip this category with a nontrivial braiding which depends on the K-grading. With this braiding, we find that the Hall algebra does satisfy the bialgebra condition exactly for the standard multiplication and comultiplication, and can also be equipped with an antipode, making it a Hopf algebra object in $\Vect^K$.
\end{abstract}

\section{Introduction}
Hall algebras have been a popular topic in recent years because of their connection to quantum groups. It is a well known fact, due to Ringel \cite{Ringel}, that the Hall algebra constructed from the representations of a Dynkin quiver over a finite field $\F_q$ is isomorphic to `half' of the quantum group associated to the same Dynkin diagram, namely $U_q^+(\g)$ where $\g$ is the Lie algebra generated by the quiver. This construction provides interesting insight into many structures on the quantum group, but unfortunately does not do everything we hope. 

One of the fundamental problems of Hall algebras arises when we try to make the algebra into a Hopf algebra. In the initial definitions of the Hall algebra, we start with a nice associative multiplication. We also find that the Hall algebra is a coalgebra with an equally nice coassociative comultiplication. However, when we try to check that the algebra and coalgebra fit together to form a bialgebra, we see this fails in the standard underlying category $\Vect$ with its usual braiding. Instead, the combination of these maps obeys ``Green's Formula'', a relationship between the multiplication and comultiplication which we describe in detail below (Proposition \ref{GF}). This formula basically says that the Hall algebra is `almost' a bialgebra in the standard category $\Vect$. Specifically, we only miss being a bialgebra by a coefficient. To see where this extra coefficient comes from, consider the string diagrams which describe the general bialgebra compatibility axiom. As is standard, we will write multiplication of two elements as:
\[\xy
(-5,0);(0,-6)**\crv{(-5,-2)&(0,-4)};
(5,0);(0,-6)**\crv{(5,-2)&(0,-4)};
(0,-6);(0,-9)**\crv{};
\endxy\]
and comultiplication of an element as:
\[\xy
(-5,0);(0,6)**\crv{(-5,2)&(0,4)};
(5,0);(0,6)**\crv{(5,2)&(0,4)};
(0,6);(0,9)**\crv{};
\endxy\]
We can then draw the bialgebra axiom as follows. We first to draw multiplication, follow by comultiplication, which looks like:
\[ \xy 0;/r.10pc/:
(-10,-20)*{}="b1"; (10,-20)*{}="b2";
(-10,20)*{}="T1"; (10,20)*{}="T2";
(0,10)*{}="C";(0,-10)*{}="D";
"T1";"C"**\crv{(-10,17)&(0,13)};
"T2";"C"**\crv{(10,17)&(0,13)};
"b1";"D"**\crv{(-10,-17)&(0,-13)};
"b2";"D"**\crv{(10,-17)&(0,-13)};
"D";"C"**\crv{};
\endxy \] 
\noindent This should equal the result of comultiplying each element and then multiplying the resulting tensor product of elements. This will look like:
\[\xy 0;/r.10pc/:
(-10,-20)*{}="b1"; (10,-20)*{}="b2";
(-10,20)*{}="t1"; (10,20)*{}="t2";
(-10,15)*{}="A";(10,15)*{}="B";
(-10,-15)*{}="E";(10,-15)*{}="F";
"t1";"A"**\crv{};
"t2";"B"**\crv{};
"A";"E"**\crv{(-10,10)&(-18,0)&(-10,-10)};
"B";"F"**\crv{(10,10)&(18,0)&(10,-10)};
"E";"b1"**\crv{};
"F";"b2"**\crv{};
"B";"E"**\crv{(10,10)&(-10,-10)}\POS?(.5)*{\hole}="H";
"A";"H"**\crv{(-10,10)};
"H";"F"**\crv{(10,-10)};
\endxy \]
But there is wrinkle, namely the braiding of the strings halfway down the diagram. This means we must be working in a braided monoidal category. For the Hall algebra, the seemingly natural choice to work in would be $\Vect$. In $\Vect$ this braiding would simply swap elements with no coefficient. However we have already noted that in $\Vect$ the Hall algebra does not satisfy the bialgebra condition as desired.

To `fix' this, a new structure called a `twisted' bialgebra is usually introduced, where swapping the order of elements can still be done, but at the price of an extra coefficient. This coefficient becomes $q^{-\langle A,D\rangle}$ when swapping elements $A$ and $D$, where $\langle A,D\rangle$ is a bilinear form on a group $K$ (called the Grothendieck group) related to the underlying category of the Hall algebra. 

To obtain a true (untwisted) bialgebra, one then extends the Hall algebra to some larger algebra and alters the multiplication and comultiplication. This process is interesting in its own right, because the result is isomorphic to a larger piece of a quantum group, namely the universal enveloping algebra of the Borel $\mathfrak{b}$. However, we want to take a different direction to avoid the artificial nature of this fix.

In this paper, we will approach the problem directly. Instead of describing the Hall algebra as a `twisted' bialgebra, we will find a braided monoidal category other than $\Vect$ where the Hall algebra is a true bialgebra object. We accomplish this by giving the category of $K$-graded vector spaces, $\Vect^K$, a braiding that encodes the twisting in the Hall algebra. This works since the extra coefficient $q^{-\langle A,D\rangle}$ from Green's Formula depends on the crossing strands in the diagram for the bialgebra axiom. We then accomplishes the same task, but in a way that accounts for the correction factor in the underlying structure, rather than including it later. This idea was mentioned by Kapranov \cite{Kapranov} but details were not provided. Also, Kapranov was working with the same twisted multiplication and comultiplication as Ringel \cite{Ringel2}, where we are using the simpler, non-twisted versions of the maps instead. We will then round out the paper by providing the antipode for this bialgebra to show that the Hall algebra is a Hopf algebra object in our new category.

\section{Hall Algebras}\label{hall}
In this section we will describe the construction of the Ringel-Hall algebra. We begin with a quiver $Q$ (i.e. a directed graph) whose underlying graph is that of a simply-laced Dynkin diagram. We will then consider the abelian category $\Rep(Q)$ of all finite dimensional representation of the quiver $Q$ over a fixed finite field $\mathbb{F}_q$.

We start by fixing a finite field $\F_q$ and a directed graph $D$, 
which might look like this:
\[ \xymatrix@=10pt{
 &&&&& \bullet  \\
 \bullet \ar@(ul,dl)[] \ar@/^1pc/[rr]
 && \bullet \ar@/^1pc/[ll] \ar@/^1pc/[rr] \ar@/_1pc/[rr] \ar[rr]
 && \bullet \ar[ur] \ar[dr] \\
 &&&&& \bullet  
 }
\]
We shall call the category $Q$ freely generated by $D$ a \textbf{quiver}.
The objects of $Q$ are the vertices of $D$, while the morphisms are
edge paths, with paths of length zero serving as identity morphisms.

By a \textbf{representation} of the quiver $Q$ we mean a functor 
\[   R \maps Q \to \FinVect_q, \]
where $\FinVect_q$ is the category of finite-dimensional vector spaces
over $\F_q$.  Such a representation simply assigns a vector space
$R(d) \in \FinVect_q$ to each vertex of $D$ and a linear operator
$R(e) \maps R(d) \to R(d')$ to each edge $e$ from $d$ to $d'$.  By a
\textbf{morphism} between representations of $Q$ we mean a natural
transformation between such functors.  So, a morphism $\alpha \maps R
\to S$ assigns a linear operator $\alpha_d \maps R(d) \to S(d)$ to
each vertex $d$ of $D$, in such a way that
\[
\xymatrix{
R(d) \ar[d]_{\alpha_d} \ar[r]^{R(e)} & R(d') \ar[d]^{\alpha_{d'}} \\
S(d) \ar[r]_{S(d)} & S(d')
}
\]
commutes for any edge $e$ from $d$ to $d'$.  There is a category
$\Rep(Q)$ where the objects are representations of $Q$ and the
morphisms are as above.  This is an abelian category, so we can speak
of indecomposable objects, short exact sequences, etc.\ in this
category.

In 1972, Gabriel \cite{Gabriel} discovered a remarkable fact.  Namely:
a quiver has finitely many isomorphism classes of indecomposable
representations if and only if its underlying graph, ignoring the
orientation of edges, is a finite disjoint union of Dynkin diagrams of
type $A, D$ or $E$.  These are called {\bf simply laced} Dynkin
diagrams.

Henceforth, for simplicity, we assume the underlying graph of our
quiver $Q$ is a simply laced Dynkin diagram when we ignore the
orientations of its edges.  Let $X$ be the underlying groupoid of
$\Rep(Q)$: that is, the groupoid with representations of $Q$ as
objects and \textit{isomorphisms} between these as morphisms.  We will
use this groupoid to construct the Hall algebra of $Q$.

As a vector space, the Hall algebra is just $\R[\u{X}]$.
Recall that this is the vector space
whose basis consists of isomorphism classes of objects in $X$.  In
fancier language, it is the zeroth homology of $X$.

We now focus our attention on the Hall algebra product.  Given three quiver
representations $M,N,$ and $E$, we define the set:
\[\mathcal{P}_{MN}^E=
\{(f,g): 
0\to N \stackrel{f}{\rightarrow} E \stackrel{g}{\rightarrow} M \to 0 
\textrm{\; is exact} \} \]
and we call its set cardinality $P_{MN}^E$. In the chosen category this set has a finite cardinality, since each representation is a finite-dimensional vector space over a finite field. 
The Hall algebra product counts these exact sequences, but with a
subtle `correction factor':
\[[M] \cdot [N] =\sum_{[E] \in \u{X}} 
\frac{P_{MN}^E}{\aut(M) \, \aut(N)}\, [E] \,.\]
Where we call $\aut(M)$ the set cardinality of the group $\Aut(M)$.

Somewhat surprisingly, the above product is associative.  In fact,
Ringel \cite{Ringel} showed that the resulting algebra is isomorphic
to the positive part $U_q^+ \g$ of the quantum group corresponding to
our simply laced Dynkin diagram!  So, roughly speaking, the Hall algebra
of a simply laced quiver is `half of a quantum group'.

This isomorphism also relates to a coalgebra structure on the Hall algebra. Using the same ideas from the multiplication formula, we can define a comultiplication on the Hall algebra to be a carefully weighted sum on ways to `factor' a representation via short exact sequences. Formulaically this becomes:
\[\Delta(E)=\sum_{[M],[N] \in \u{X}} 
\frac{|\mathcal{P}_{MN}^E|}{ \aut(E)}\, [N]\ten [M] \,.\]
Again, Ringel found that these are the correct factor to make the comultiplication coassociative. However, we immediately run into a problem; these two maps do not satisfy the compatibility condition for a bialgebra.

\section{The Category of K-graded Vector Spaces}\label{gvs}

It is interesting to note that the standard multiplication and comultiplication on $U_q^+ \g$ (which the Hall algebra is isomorphic to) also do not satisfy the compatibility axiom of a bialgebra, so we should not expect the Hall algebra to, either. This does not mean there is not an interesting relationship between the multiplication and comultiplication in the Hall algebra. This relationship is often described as being a `twisted' bialgebra, where we do not use the standard extension of the multiplication to the tensor product. We would like to take a different point of view here. It turns out that the bialgebra axiom can be satisfied if we change the category in which we ask for them to be compatible. 

In order to describe this new category, we will start with a definition of the Grothendieck group of a general abelian category.
\begin{definition}
Let $\mathcal{A}$ be an abelian category. We can define an equivalence relation on isomorphism classes of objects in $\mathcal{A}$ by $[A]+[B]=[C]$ if there exists a short exact sequence $0\to A\to C\to B\to 0$. The set of equivalence classes under this relation form a group $K_0(\mathcal{A})$ called the {\rm \textbf{Grothendieck group}}.
\end{definition} 
$K_0(\mathcal{A})$ has a universal property in the following sense. Given any abelian group $G$, any additive function $f$ from isomorphism classes of $\mathcal{A}$ to the group $G$ will give a unique abelian group homomorphism $\tilde{f}\maps K_0(\mathcal{A})\to G$ such that the following diagram commutes:

\[\xymatrix{
\mathcal{A}\ar[rr]\ar[dr]_f &  &  K_0(\mathcal{A}) \ar[dl]^{\exists !\tilde{f}} \\
  & G & \\
}\]

The original purpose of the Grothendieck group was to study Euler characteristics, and this is precisely why we are interested in them here.

In many of the standard references for Hall algebras \cite{Hubery, Schiffman} the characteristics of the Grothendieck group of $\Rep(Q)$ are explained explicitly. Many of these properties follow from the fact that $\Rep(Q)$ is hereditary.  We can also describe these properties in the general case of an abelian category $\mathcal{A}$ which has finite homological dimension. However, to construct the entire Hall algebra, our
abelian category will need to hold to the extra finiteness properties that the
groups $\Ext^i(M,N)$ must be finite. This condition is sufficient since it makes the sets $\mathcal{P}^E_{MN}$ finite, and makes the bilinear form in the next proposition well defined.

\begin{proposition}Let $\mathcal{A}$ be an abelian $k$-linear category for some field $k$. Suppose that $\mathcal{A}$ has finite homological dimension $d$ and $\dim\Ext^i(M,N)$ is finite for all objects $M,N\in \mathcal{A}$. If $K=K_0(\mathcal{A})$ is the Grothendieck group of $\mathcal{A}$, then $K$ admits a bilinear form $\langle\cdot,\cdot\rangle\maps K\times K\to \C$ given by:
\[\langle \underline{m},\underline{n}\rangle = \sum_{i=0}^d (-1)^{i}\dim\Ext^i(M,N)\]
\end{proposition}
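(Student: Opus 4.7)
The plan is to define the form first on isomorphism classes of objects and then invoke the universal property of $K_0(\mathcal{A})$ (stated just above the proposition) to push it down to a bilinear pairing on $K \times K$. Concretely, I would define
\[ f \maps \mathrm{Iso}(\mathcal{A}) \times \mathrm{Iso}(\mathcal{A}) \to \C, \qquad f([M],[N]) = \sum_{i=0}^{d}(-1)^i\dim\Ext^i(M,N), \]
which makes sense because finite homological dimension $d$ truncates the sum and each $\dim\Ext^i(M,N)$ is finite by hypothesis. By the universal property, to show this descends to a well-defined $\Z$-bilinear map $K \times K \to \C$ it is enough to show that $f$ is additive on short exact sequences in each argument separately.

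For additivity in the first slot, fix $N$ and a short exact sequence $0 \to M' \to M \to M'' \to 0$. The standard long exact sequence of $\Ext^*(-,N)$ produces
\[ 0 \to \Hom(M'',N) \to \Hom(M,N) \to \Hom(M',N) \to \Ext^1(M'',N) \to \cdots \]
which, thanks to finite homological dimension, is a finite long exact sequence of finite-dimensional $k$-vector spaces. The alternating sum of dimensions in any such exact sequence vanishes, so regrouping the resulting identity by each of the three objects $M', M, M''$ gives
\[ \sum_i (-1)^i\dim\Ext^i(M,N) = \sum_i (-1)^i\dim\Ext^i(M',N) + \sum_i (-1)^i\dim\Ext^i(M'',N), \]
i.e.\ $f([M],[N]) = f([M'],[N]) + f([M''],[N])$. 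The universal property of $K_0$ then promotes $[M] \mapsto f([M],[N])$ to a group homomorphism $K \to \C$ for every fixed $N$. Running the same argument with the long exact sequence of $\Ext^*(M,-)$ in the second slot shows that for each fixed $[\underline{m}] \in K$ the resulting assignment $[N] \mapsto \langle \underline{m}, [N]\rangle$ is also additive on short exact sequences, hence factors through $K$ by the universal property once more. Combining these gives the desired $\Z$-bilinear form on $K \times K$.

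The main obstacle, and essentially the only real content, is the appeal to the long exact sequence of $\Ext$: one must know that $\mathcal{A}$ admits such a long exact sequence (equivalently, that derived functors of $\Hom$ behave correctly), which is standard provided $\mathcal{A}$ has enough injectives or projectives; in the cases of interest for this paper (most notably $\mathcal{A}=\Rep(Q)$ for a Dynkin quiver) this is immediate and $d \leq 1$, so the sum collapses to $\dim\Hom(M,N) - \dim\Ext^1(M,N)$. Once that long exact sequence is in hand, the rest is a two-line application of the alternating-sum-of-dimensions principle and the universal property already displayed in the excerpt.
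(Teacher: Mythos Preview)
Your argument is correct and rests on the same core idea as the paper --- feed a short exact sequence into the long exact sequence of $\Ext$ and read off additivity from exactness --- but your execution is both cleaner and more complete. The paper restricts to the hereditary case $d=1$, works only with the split sequence $0 \to N_1 \to N_1 \oplus N_2 \to N_2 \to 0$ in the second variable, and then computes by hand with kernels and images of the maps in the resulting six-term sequence. Strictly speaking that only verifies additivity on direct sums, whereas descent to $K_0$ requires additivity on \emph{arbitrary} short exact sequences; your version handles that directly by applying the alternating-sum-of-dimensions identity to the long exact sequence coming from an arbitrary short exact sequence, and it does so in both variables and for all $d$ at once. The paper's approach is more hands-on (every term is visible), while yours is conceptually tighter and actually establishes well-definedness on $K_0$ via the universal property rather than just biadditivity on isomorphism classes.
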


\begin{proof}
We prove the theorem for $d=1$ (i.e. when the category is hereditary) since this is the main case we will use. The case when $d=0$ is simply bilinearity of $\Hom$, and the cases where $d>1$ follows by a similar argument to $d=1$.\\

For $d=1$ we need to show that:
\[\dim\Hom(M,N_1\oplus N_2)-\dim\Ext^1(M,N_1\oplus N_2)=\]
\[\dim\Hom(M,N_1)-\dim\Ext^1(M,N_1)+\dim\Hom(M,N_2)-\dim\Ext^1(M,N_2)\]
we begin with the short exact sequence:
\[0\to N_1\stackrel{i_1}{\rightarrow} N_1\oplus N_2 \stackrel{\pi_2}{\rightarrow} N_2\to 0\]
which, since $d=1$, gives rise to the long exact sequence:
\[0\to \Hom(M,N_1) \stackrel{}{\rightarrow} \Hom(M,N_1\oplus N_2) \stackrel{}{\rightarrow} \Hom(M,N_2) \stackrel{h}{\rightarrow}\] \[\Ext^1(M,N_1)\stackrel{}{\rightarrow} \Ext^1(M,N_1\oplus N_2) \stackrel{}{\rightarrow} \Ext^1(M,N_2) \to 0.\]
Using a variety of basic equations from the fact that this sequence is exact, as well as some dimension arguments, The left hand sides becomes:
\[\dim\Hom(M,N_1\oplus N_2)-\dim\Ext^1(M,N_1\oplus N_2)\]
\[=\dim \im \tilde{\pi_2}+\dim \ker \tilde{\pi_2}-\dim \im \hat{\pi_2}-\dim \ker \hat{\pi_2}\]
and the right hand side turns into:
\[\dim\Hom(M,N_1)-\dim\Ext^1(M,N_1)+\dim\Hom(M,N_2)-\dim\Ext^1(M,N_2)\]
\[=\dim \im h+\dim \ker h+\dim\im \tilde{i_1}-\dim \im \hat{i_1}-\dim \ker \hat{i_1}-\dim\im \hat{\pi_2}\]
\[=\dim \ker \hat{i_1}+\dim \im \tilde{\pi_2}+\dim\ker \tilde{\pi_2}-\dim \ker \hat{\pi_2}-\dim \ker \hat{i_1}-\dim\im \hat{\pi_2}\]
\[=\dim \im \tilde{\pi_2}+\dim\ker \tilde{\pi_2}-\dim \ker \hat{\pi_2}-\dim\im \hat{\pi_2}.\]
\end{proof}

In general, it is possible to construct a braided monoidal category $\Vect^G$ from any abelian group $G$ equipped with a bilinear form $\langle\cdot,\cdot\rangle$. One common example is the category of super-algebras, which can be thought of in this context in terms of the group $\Z_2$ with its unique non-trivial bilinear form. Joyal and Street \cite{JoyalStreet} described the general idea of constructing a braided monoidal category from a bilinear form. In the next theorem, we will describe how this braiding works in detail for our desired case of the Grothendieck group $K=K_0(\mathcal{A})$ with the previously described bilinear form. 

\begin{theorem}\label{BMC} Let $\mathcal{A}$ be an abelian, $k$-linear category with finite homological dimension. Let $K=K_0(\mathcal{A})$ be its Grothendieck group, and suppose $\dim\Ext^i(M,N)$ is finite for all objects $M,N \in\mathcal{A}$. Then, the category $\Vect^K$ of $K$-graded vector spaces and grade preserving linear operators is a braided monoidal category, with the braiding given by:
\[B_{V,W}:V\ten W\to W\ten V\]
\[v\ten w \mapsto q^{-\langle \underline{n},\underline{m}\rangle}w\ten v\]
where $q$ is a non-zero element of $k$.
\end{theorem}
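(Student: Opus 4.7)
The plan is to construct the monoidal structure and then verify that the stated formula genuinely defines a braiding satisfying the hexagon axioms, reducing the main work to bilinearity of $\langle\cdot,\cdot\rangle$, which we already have from the previous proposition.

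First, I would endow $\Vect^K$ with its standard symmetric monoidal structure inherited from $\Vect$: the tensor product of two $K$-graded vector spaces $V$ and $W$ is graded by $(V\ten W)_{\underline{k}} = \bigoplus_{\underline{m}+\underline{n}=\underline{k}} V_{\underline{m}}\ten W_{\underline{n}}$, with unit object $k$ concentrated in grade $0$, and with associator and unitors inherited from $\Vect$. These satisfy the pentagon and triangle axioms because they already do so in $\Vect$ and the grading structure is preserved.

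Next, I would define $B_{V,W}$ on homogeneous elements by the stated formula and extend by bilinearity. Since $v\in V_{\underline{m}}$ and $w\in W_{\underline{n}}$ both live in the $(\underline{m}+\underline{n})$-graded piece on either side of the map, the scalar $q^{-\langle \underline{n},\underline{m}\rangle}$ depends only on the grades, so $B_{V,W}$ is a well-defined grade-preserving linear map with inverse supplied by multiplying by $q^{+\langle \underline{n},\underline{m}\rangle}$ after swapping. Naturality in $V$ and $W$ is immediate: any grade-preserving morphism $f\maps V\to V'$ sends $V_{\underline{m}}$ into $V'_{\underline{m}}$, so the scalar picked up by $B$ is unchanged by pre- or post-composition with $f\ten \id$ or $\id\ten g$.

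The real content is the hexagon axioms. For homogeneous elements $u\in U_{\underline{\ell}}$, $v\in V_{\underline{m}}$, $w\in W_{\underline{n}}$, one hexagon requires
\[
B_{U\ten V,W}(u\ten v\ten w) = (B_{U,W}\ten \id_V)\circ(\id_U\ten B_{V,W})(u\ten v\ten w),
\]
which after chasing the associators reduces to the scalar identity
\[
q^{-\langle \underline{n},\,\underline{\ell}+\underline{m}\rangle} \;=\; q^{-\langle \underline{n},\underline{\ell}\rangle}\,q^{-\langle \underline{n},\underline{m}\rangle}.
\]
This follows at once from linearity of $\langle\cdot,\cdot\rangle$ in the second slot. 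The other hexagon reduces analogously to
\[
q^{-\langle \underline{m}+\underline{n},\,\underline{\ell}\rangle} \;=\; q^{-\langle \underline{m},\underline{\ell}\rangle}\,q^{-\langle \underline{n},\underline{\ell}\rangle},
\]
which is linearity in the first slot. So both hexagons collapse to the bilinearity of the Euler form established in the preceding proposition.

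The main obstacle, such as it is, is purely bookkeeping: one must be careful about which grade goes into which slot of $\langle\cdot,\cdot\rangle$ as tensor factors are re-grouped by the associator, since $\langle\cdot,\cdot\rangle$ is generally not symmetric (so this will be a genuine braiding, not a symmetry). Once one fixes the convention that the scalar attached to $B_{V,W}$ pairs the outgoing-left grade against the outgoing-right grade, the two hexagon computations each use exactly one of the two slotwise linearities, and nothing more is needed.
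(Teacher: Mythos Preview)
Your proposal is correct and follows essentially the same approach as the paper: inherit the monoidal structure from $\Vect$, define the braiding on homogeneous pieces, and reduce the hexagon axioms to bilinearity of the Euler form. If anything, your version is more complete, since you explicitly address naturality and invertibility and treat both hexagons, whereas the paper checks only one and leaves the rest to the reader.
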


\begin{proof} The monoidal structure on this category is just the tensor product in the category $\Vect$. To define a braiding on this category, we first note that the braiding is defined by isomorphisms in the category which are graded linear operators. Because of linearity, it is enough to define these isomorphisms on a single graded piece. Also note that for any two $K$-graded vector spaces $V$ and $W$, a graded piece of the tensor product $V\ten W$ can be written as the sum of tensor products of graded pieces from $V$ and $W$, or more precisely:
\[(V\ten W)_{\underline{d}}=\bigoplus_{\underline{n}\in K} V_{\underline{n}}\ten W_{\underline{d}-\underline{n}}.\]
This lets us define the braiding $B_{V,W}\maps V\ten W\to W\ten V$ only on the tensor product of the graded piece $V_{\underline{n}}\ten W_{\underline{m}}$. 
We thus define the map: 
\[B_{\underline{n},\underline{m}}\maps V_{\underline{n}}\ten W_{\underline{m}}\to W_{\underline{m}}\ten V_{\underline{n}}\]
\[v\ten w \mapsto q^{-\langle \underline{n},\underline{m}\rangle}w\ten v\]
which is easily seen to be an isomorphism. We only need to check the hexagon equations, i.e. ones of the form: 
\[\xymatrix{
             & (W\ten V)\ten U\ar[r]^\alpha & W\ten (V\ten U)\ar[dr]^{1\ten B_{V,U}} & \\
(V\ten W)\ten U \ar[ur]^{B_{V,W}\ten 1}\ar[dr]_\alpha & & & W\ten (U\ten V) \\
             & V\ten (W\ten U) \ar[r]_{B_{V,W\ten U}} & (W\ten U)\ten V\ar[ur]_\alpha & \\
}\]
We will make the argument for the above hexagon identity, noting the the other versions follow by a similar argument. Now, since we have restricted ourselves to vector spaces with a single grade, it is enough to chase a general element around this diagram. let $v\in V_{\underline{n}}$, $w\in W_{\underline{m}}$, and $u\in U_{\underline{p}}$. The top path of the hexagon diagram yields the composite:
\[(v\ten w)\ten u \mapsto q^{-\langle\underline{n},\underline{m}\rangle-\langle\underline{n},\underline{p}\rangle}w\ten(u\ten v).\]
For the bottom path we note that $v\ten w\in (V\ten W)_{\underline{m+p}}$, so we get the composite:
\[(v \ten w)\ten u \mapsto q^{-\langle \underline{n}, \underline{m+p}\rangle} w \ten(u \ten v).\]
Hence, commutativity of the diagram will follow from the equality \[-\langle\underline{m},\underline{n}\rangle-\langle\underline{m},\underline{p}\rangle=-\langle \underline{m}, \underline{n+p}\rangle,\] which is precisely bilinearity of the form $\langle\cdot, \cdot\rangle$.
\end{proof}

\section{The Hopf Algebra Structure}
Now we consider our Hall algebra in the braided monoidal category $\Vect^K$. The concept of a Hopf algebra object in a braided monoidal category was described by Majid \cite{Majid}, where he called it a `braided group', but later \cite{Majid2} described it in the way we will use here. The basic idea is to ask if the standard defining commutative diagrams for a Hopf algebra hold in some braided monoidal category, instead of the symmetric monoidal category $\Vect$. For the remainder of this section, we will let $Q$ be a simply laced Dynkin quiver. We will focus back on the specific abelian category $\Rep(Q)$ and the category of $K_0(\Rep(Q))-$graded vector spaces, which we showed in Section \ref{gvs} to be a braided monoidal category. Remember that $\Rep(Q)$ is hereditary and satisfies all the finiteness conditions of Section \ref{gvs}. We can now state the main theorem of this paper.
\begin{theorem}\label{HO} The Hall algebra of $\Rep(Q)$ is a Hopf algebra object in the category $Vect^K$. \end{theorem}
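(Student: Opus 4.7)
The plan is to verify each piece of the Hopf algebra axioms in $\Vect^K$ and to see how the nontrivial braiding from Theorem \ref{BMC} is exactly what resolves the classical failure of the bialgebra axiom in $\Vect$.

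First I would establish that the Hall algebra lives in $\Vect^K$ and that all structure maps are morphisms there. The $K$-grading places $[M]$ in the piece of grade $\underline{M} \in K_0(\Rep(Q))$. If $[M]\cdot[N]$ is supported on $[E]$ fitting into $0 \to N \to E \to M \to 0$, then $\underline{E} = \underline{M} + \underline{N}$, so multiplication is grade preserving; the same short exact sequence argument shows that comultiplication is grade preserving into $\Vect^K \otimes \Vect^K$. The unit $\eta \maps k \to \R[\underline{X}]$ sending $1 \mapsto [0]$ and the counit $\epsilon \maps \R[\underline{X}] \to k$ picking out the coefficient of $[0]$ are grade-preserving when $k$ sits in grade $\underline{0}$. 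Associativity of the product and coassociativity of the coproduct are Ringel's classical results, which I would cite directly rather than reproduce.

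The crucial step is verifying the bialgebra compatibility
\[ \Delta \circ \mu \;=\; (\mu \otimes \mu)\circ(\id \otimes B \otimes \id)\circ(\Delta \otimes \Delta) \]
in $\Vect^K$. I would expand both sides on basis elements $[M] \otimes [N]$. The left-hand side counts short exact sequences $0 \to N \to E \to M \to 0$ followed by filtrations of $E$. The right-hand side first comultiplies $[M]$ and $[N]$ into $\sum [A_1]\otimes[A_2]$ and $\sum[D_1]\otimes[D_2]$, then applies the braiding $B$ on the middle two strands $[A_2]\otimes[D_1]$, yielding the coefficient $q^{-\langle \underline{A_2},\underline{D_1}\rangle}$, and finally multiplies. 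By Green's Formula (Proposition \ref{GF}), the mismatch between the two sides in $\Vect$ is exactly a factor of $q^{-\langle \underline{A_2},\underline{D_1}\rangle}$ summed in the same indexing pattern; in $\Vect^K$ this factor is absorbed into the braiding, and the two sides agree identically. This is the only place the change of ambient category plays a genuine role.

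Finally, for the antipode $S$, I would exploit the fact that the Hall algebra is connected and $\N$-graded by total dimension of the quiver representation, with the degree-zero piece spanned by $[0]$. For any such connected graded bialgebra in a braided monoidal category, one constructs $S$ by recursion: set $S([0]) = [0]$, and for $[M]$ of positive total dimension write $\Delta[M] = [M]\otimes[0] + [0]\otimes[M] + \Delta'[M]$ with $\Delta'[M]$ supported in strictly lower degrees in each tensor factor, then define $S([M])$ by solving the antipode equation $\mu\circ(S\otimes\id)\circ\Delta = \eta\circ\epsilon$ against the already-defined values. Because $B$ preserves the grading, this recursion is well-defined and produces a grade-preserving $S$; the opposite antipode axiom is checked identically. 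This yields the Hopf algebra object structure in $\Vect^K$.

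The hard part will be the bialgebra compatibility: the bookkeeping required to match the specific $q$-exponent produced by the braiding (on the particular pair of strands being crossed) to the exact exponent appearing in Green's Formula, uniformly over the four summation indices, is where all the work lies. Everything else, including existence of the antipode, follows from general principles once the bialgebra structure is in place.
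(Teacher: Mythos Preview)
Your proposal is correct and tracks the paper's argument closely for the bulk of the proof: the $K$-grading on basis elements, the observation that multiplication and comultiplication are grade-preserving because $0\to N\to E\to M\to 0$ forces $\underline{E}=\underline{M}+\underline{N}$ in $K_0$, the citation of Ringel for (co)associativity, and the use of Green's Formula (Proposition~\ref{GF}) to match the braiding coefficient $q^{-\langle A,D\rangle}$ against the twist appearing in the compatibility computation. Your labeling $[A_2]\otimes[D_1]$ for the middle strands corresponds exactly to the paper's $[A]\otimes[D]$, so the exponents line up as required.

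The one place you diverge is the antipode. The paper does not use a recursive construction from connectedness; it simply declares $S([M])=-[M]$ on generators and asserts (without proof) that this is an antipode. Your route via the connected $\N$-filtration by total dimension is the standard Takeuchi-style existence argument: it is more self-contained and makes clear why an antipode must exist, at the cost of not producing an explicit formula. The paper's version is more concrete but leaves the verification to the reader and depends on knowing that the simples generate and that $S$ extends as a braided anti-homomorphism. Either approach is valid here; yours is arguably the cleaner justification.
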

To prove this theorem we need to work through the following lemmas. For what follows, we will set $X$ to be the underlying groupoid of $\Rep(Q)$, $\underline{X}$ to be the set of isomorphism classes in $X$, and $K=K_0(\Rep(Q))$. Recall that $R[\underline{X}]$ is the vector space of all finite linear combinations of elements of $\u{X}$. This vector space, which is the underlying vector space of the Hall algebra, is easily seen to be $K$ graded.
\begin{lemma}
The vector space $\Hall=\R[\underline{X}]$ is a $K$-graded vector space, with the grading on each isomorphism class $[M]\in\u{X}$ given by its image in $K$.
\end{lemma}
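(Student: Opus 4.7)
The plan is to unpack the definitions carefully: since $\Hall$ is defined to have as basis the set $\u{X}$ of isomorphism classes of quiver representations, a $K$-grading on $\Hall$ amounts to a function $\u{X} \to K$, which then determines the homogeneous components by $\Hall_{\underline{m}} = \text{span}\{[M] : M \mapsto \underline{m}\}$ and gives the decomposition $\Hall = \bigoplus_{\underline{m} \in K} \Hall_{\underline{m}}$ automatically from the fact that $\u{X}$ is a disjoint union of its $K$-fibers.

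First I would recall the definition of $K = K_0(\Rep(Q))$: it comes with a canonical map from the class of objects of $\Rep(Q)$ sending $M$ to its equivalence class $\underline{m}$, and this map descends to isomorphism classes because if $M \cong M'$ then the short exact sequence $0 \to 0 \to M \to M' \to 0$ (or equivalently the split sequence $0 \to M \to M' \to 0 \to 0$) forces $\underline{m} = \underline{m}'$ in $K$. So the assignment $[M] \mapsto \underline{m}$ is well defined on $\u{X}$.

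Next I would define the grade-$\underline{m}$ component as
\[
\Hall_{\underline{m}} = \bigoplus_{\substack{[M] \in \u{X} \\ M \mapsto \underline{m}}} \R \cdot [M],
\]
so that $\Hall = \bigoplus_{\underline{m} \in K} \Hall_{\underline{m}}$ by partitioning the basis $\u{X}$ according to its image in $K$. There is nothing to check beyond this partition, since the direct sum structure is inherited from $\R[\u{X}]$ being the free vector space on the set $\u{X}$.

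The main (and essentially only) obstacle is the well-definedness point above, i.e.\ checking that the $K$-class of a representation depends only on its isomorphism class and not on the representative; but this is immediate from the definition of $K_0$ as recalled in Section~\ref{gvs}. Once this is in hand, the lemma follows with no further computation.
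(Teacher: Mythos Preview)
Your proposal is correct and is exactly the routine unpacking the paper has in mind; in fact the paper does not supply a proof at all, remarking just before the lemma that the vector space ``is easily seen to be $K$ graded.'' Your argument simply spells out that obvious verification.
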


For the next two lemmas, we note that the multiplication and comultiplication described were shown to be associative and coassociative in the original category $\Vect$ by Ringel \cite{Ringel}. This fact passes to our new category since neither axiom requires or depends on the particular braiding on vector spaces, so we will not repeat the argument. After stating both lemmas, we will provide a brief description of why each one is a morphism in the new category $\Vect^K$.

\begin{lemma}\label{mult} The multiplication map $m:\Hall\ten \Hall\to \Hall$ defined on basis elements by:

\[m([M]\ten[N]) =\sum_{[E]} 
\frac{P_{MN}^E}{\aut(M) \, \aut(N)}\, [E]\]
is a morphism in $\Vect^K$.
\end{lemma}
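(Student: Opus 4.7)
The plan is to verify directly that $m$ sends the homogeneous piece $(\Hall \ten \Hall)_{\underline{d}}$ into $\Hall_{\underline{d}}$ for every $\underline{d} \in K$. Since $m$ is defined on basis elements and extended linearly, it suffices to check that for each pair $[M], [N]$, the image $m([M] \ten [N])$ is homogeneous of grade $\underline{m} + \underline{n}$, where $\underline{m}, \underline{n}$ denote the images of $[M], [N]$ in $K$.

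First I would recall the grading: the basis element $[M] \ten [N] \in \Hall \ten \Hall$ lives in grade $\underline{m} + \underline{n}$ by the definition of the tensor product grading in $\Vect^K$, and each basis element $[E] \in \Hall$ lives in grade $\underline{e}$. Thus I need to show that every $[E]$ appearing with nonzero coefficient in $m([M] \ten [N])$ satisfies $\underline{e} = \underline{m} + \underline{n}$.

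The key step is to invoke the defining relation of the Grothendieck group. The coefficient of $[E]$ in $m([M] \ten [N])$ is $P_{MN}^E / (\aut(M)\aut(N))$, which is nonzero precisely when $\P_{MN}^E$ is nonempty, that is, when there exists a short exact sequence
\[ 0 \to N \to E \to M \to 0. \]
By the very definition of $K_0(\Rep(Q))$, the existence of such a sequence forces $\underline{e} = \underline{n} + \underline{m}$ in $K$. Therefore every surviving term on the right-hand side sits in the correct graded piece, and $m$ is grade-preserving.

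There is no serious obstacle here; the lemma is essentially a direct unpacking of definitions. The only thing worth emphasizing is the compatibility between the two roles played by short exact sequences in this paper: they index the structure constants of the Hall multiplication, and they simultaneously generate the relations defining $K_0(\Rep(Q))$. This compatibility is exactly what makes the grading work out. I would conclude by noting that linearity then extends the homogeneity check to all of $\Hall \ten \Hall$, so $m$ is a morphism in $\Vect^K$.
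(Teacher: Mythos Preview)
Your proof is correct and follows essentially the same approach as the paper: the paper observes that whenever $[E]$ appears with nonzero coefficient there is a short exact sequence $0\to N\to E\to M\to 0$, and hence $[M]+[N]=[E]$ in $K$, so the grading is preserved. Your write-up simply spells this out in slightly more detail.
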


\begin{lemma}\label{comult} The comultiplication map $\Delta:\Hall\to \Hall\ten \Hall$ defined on basis elements by:

\[\Delta(E)=\sum_{[M],[N]} 
\frac{P_{MN}^E}{ \aut(E)}\, [N]\ten [M]\]
is a morphism in $\Vect^K$. 
\end{lemma}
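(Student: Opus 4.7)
The plan is to check directly that $\Delta$ sends each homogeneous graded piece of $\Hall$ into the correct homogeneous graded piece of $\Hall \otimes \Hall$. Since $\Delta$ is specified on the basis $\u{X}$, it suffices to look at a single basis element $[E]$, which by the previous lemma is homogeneous of grade $\underline{E} \in K$. I then need to verify that every term $[N] \otimes [M]$ appearing with nonzero coefficient on the right-hand side has total grade equal to $\underline{E}$ in $K$.

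Recall that in $\Vect^K$, the grading on a tensor product is defined by
\[(V \otimes W)_{\underline{d}} = \bigoplus_{\underline{n} \in K} V_{\underline{n}} \otimes W_{\underline{d}-\underline{n}},\]
so $[N] \otimes [M]$ lives in grade $\underline{N} + \underline{M}$. The term $[N] \otimes [M]$ appears in $\Delta(E)$ with nonzero coefficient precisely when $P_{MN}^E \neq 0$, i.e.\ when there exists at least one short exact sequence
\[0 \to N \to E \to M \to 0.\]
By the defining relation of the Grothendieck group $K = K_0(\Rep(Q))$, the existence of such a short exact sequence forces $\underline{E} = \underline{N} + \underline{M}$ in $K$. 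Hence every nonzero contribution to $\Delta([E])$ has grade $\underline{E}$, matching the grade of $[E]$, and $\Delta$ is a grade-preserving linear map.

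There is essentially no obstacle here: the lemma is a direct consequence of the definition of the Grothendieck group, which was engineered precisely so that the class of the middle term of a short exact sequence equals the sum of the classes of the outer terms. The same observation is what will make Lemma \ref{mult} work for multiplication, and it is the reason the choice of $K$-grading, rather than some ad hoc grading, is the natural one for placing the Hall algebra into $\Vect^K$.
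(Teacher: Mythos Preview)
Your argument is correct and matches the paper's own reasoning: the only point is that a nonzero coefficient $P^E_{MN}$ forces a short exact sequence $0\to N\to E\to M\to 0$, so the Grothendieck-group relation $\underline{E}=\underline{N}+\underline{M}$ makes $\Delta$ grade-preserving. The paper handles Lemmas \ref{mult} and \ref{comult} together with exactly this one-line observation.
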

Note that when Q is a simply-laced Dynkin quiver, the sums in Lemmas \ref{mult} and \ref{comult} are finite. Both of these lemmas are true for a similar reason. The important fact to note here is that for a fixed $M$, $N$, and $E$ in either sum, there is a short exact sequence $0\to N\to E\to M\to 0$. So by the definition of the Grothendieck group $K$, we have that their images obey the identity $[M]+[N] = [E]$. These images determine the grade of the corresponding graded piece they sit in, so the grade is clearly preserved by both maps.

Now we can focus on the compatibility of the new maps, which was the main reason for constructing this new category. We first need an important identity for the multiplication and comultiplication known as Green's Formula.

\begin{proposition}\label{GF}{\rm (Green's Formula).} For all $M$, $N$, $X$, and $Y$ in $\Rep(Q)$ we have the identity:
\[\sum_{[E]}{\frac{P^E_{MN}P^E_{XY}}{\aut(E)}}=\sum_{[A],[B],[C],[D]}{q^{-\langle A,D\rangle}\frac{P^{M}_{AB}P^{N}_{CD}P^{X}_{AC}P^{Y}_{BD}}{\aut(A)\aut(B)\aut(C)\aut(D)}}.\]
\end{proposition}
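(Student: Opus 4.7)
The plan is to prove Green's formula by a groupoid-cardinality double-count, interpreting both sides as different stratifications of the groupoid $\mathcal{G}$ of pairs of short exact sequences sharing a common middle term. This will both explain geometrically why the formula holds and pinpoint the origin of the mysterious scalar $q^{-\langle A,D\rangle}$.

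First I would rewrite every quotient $P^{E}_{MN}/\aut(E)$ via the Riedtmann--Peng identity $P^{E}_{MN} \cdot |\Hom(M,N)| = |\Ext^1(M,N)_E|\cdot \aut(E)$, where $\Ext^1(M,N)_E \subseteq \Ext^1(M,N)$ is the subset of extension classes whose middle term is isomorphic to $E$. This replaces Hall numbers by intrinsic extension counts and removes the $\aut(E)$ denominator from the LHS, exhibiting it (up to the overall factor $|\Hom(M,N)|^{-1}|\Hom(X,Y)|^{-1}$) as the groupoid cardinality of pairs of extensions $\varepsilon_1\in\Ext^1(M,N)$ and $\varepsilon_2\in\Ext^1(X,Y)$ whose middle terms are isomorphic.

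Next I would apply the Noether isomorphism theorems to each such pair with common middle term $E$: setting $D=N\cap Y$, $B=(N+Y)/N$, $C=(N+Y)/Y$, and $A=E/(N+Y)$ produces a $3\times 3$ commutative grid whose four outer short exact sequences are exactly those counted by $P^{M}_{AB},\,P^{N}_{CD},\,P^{X}_{AC},\,P^{Y}_{BD}$. The RHS then arises from stratifying $\mathcal{G}$ by the isomorphism type of $(A,B,C,D)$ together with the four outer sequences. Given that data, one must count the $E$'s completing the grid: $E$ fits into $0\to F\to E\to A\to 0$ with $F=N+Y$, and $F$ itself fits into $0\to D\to N\oplus Y\to F\to 0$. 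This reduces the enumeration of middle terms $E$ to two layered extension counts, which can again be evaluated by Riedtmann--Peng.

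The final step is to check that when all the $|\Hom|$, $|\Ext^1|$, and $|\Aut|$ contributions from the double-count are collected, they telescope, via the additivity of the Euler form $\langle\cdot,\cdot\rangle$ in each slot and the hereditary hypothesis (so $\Ext^i=0$ for $i\geq 2$), down to the single factor $q^{-\langle A,D\rangle}$. Concretely one uses $\langle A,F\rangle=\langle A,B\rangle+\langle A,C\rangle+\langle A,D\rangle$ together with the analogous identities coming from the four outer sequences; the terms pairing $A$ with $B$ or $C$ cancel against the outer Hom/Ext contributions, leaving only the pairing of $A$ with $D$, which are the two ``corner'' objects never directly related by any of the four outer extensions.

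The main obstacle will be this last telescoping computation. Keeping track of the exact exponents produced by each Riedtmann--Peng substitution, showing they cancel in pairs, and verifying that the leftover contribution is precisely $q^{-\langle A,D\rangle}$ is a careful Euler-characteristic bookkeeping exercise. It is also the conceptual heart of the paper, since this cancellation is exactly what forces the braiding on $\Vect^K$ to be defined using the bilinear form $\langle\cdot,\cdot\rangle$ of Proposition above; any other braiding would fail to absorb the twist in Green's formula.
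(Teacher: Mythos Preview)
The paper does not actually prove Green's Formula. Immediately after the statement it says that ``the proof of Green's formula is quite complex, and involves a large amount of homological algebra,'' and defers to Ringel \cite{RingelGreen}, Hubery \cite{Hubery}, and Schiffmann \cite{Schiffman}. So there is no ``paper's own proof'' for you to match.

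That said, your outline is precisely the standard argument found in those references: pass from Hall numbers to extension counts via the Riedtmann--Peng identity (your formula $P^{E}_{MN}\,|\Hom(M,N)|=|\Ext^1(M,N)_E|\,\aut(E)$ is the correct statement, obtained from the $\Aut(E)$-action on $\mathcal{P}^E_{MN}$ with stabilizer $1+f\,\Hom(M,N)\,g$), build the $3\times3$ ``cross'' diagram from two short exact sequences with common middle term $E$ by setting $D=N\cap Y$, $B=(N{+}Y)/N$, $C=(N{+}Y)/Y$, $A=E/(N{+}Y)$, and then count how many $E$ complete a given cross. Your identification of the four outer sequences with $P^M_{AB},\,P^N_{CD},\,P^X_{AC},\,P^Y_{BD}$ is correct.

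One caution on the step you flag as the main obstacle. The discrepancy between the two counts is not merely a telescoping of $|\Hom|$ and $|\Ext^1|$ factors arising from Riedtmann--Peng substitutions; the genuine work is that the map from pairs of short exact sequences with common middle $E$ to crosses is \emph{not} a bijection. One must show (using the hereditary hypothesis in an essential way, via the long exact sequence for $\Hom(A,-)$ applied to $0\to D\to N\oplus Y\to F\to 0$) that every cross lifts, and then compute the size of each fiber. It is this fiber count, not just bookkeeping of Euler-form exponents, that produces the factor $q^{-\langle A,D\rangle}$. Your sketch gestures at this with the two layered extension counts for $F$ and $E$, but be aware that the surjectivity of the lifting is exactly where $\Ext^2=0$ enters, and the cancellation you describe happens only after that surjectivity is established.
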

The proof of Green's formula is quite complex, and involves a large amount of homological algebra. It was first presented by Ringel \cite{RingelGreen}, and also appears in \cite{Hubery} and \cite{Schiffman} with good explanations. What we are interested in is the consequence of Green's formula.

We observe in Green's formula the presence of our braiding coefficient $q^{-\langle A,D \rangle}$. It is important to note that this coefficient depends on what some might view as the ``outside'' objects $A$ and $D$, and not the ``inside'' objects $B$ and $C$. We deal with this by using a different comultiplication than the one usually described in the literature \cite{Hubery, Schiffman}. In fact, in the category $\Vect$ our chosen comultiplication is the opposite of the standard choice.

\begin{lemma}
In the category $\Vect^K$ the multiplication $m$ and comultiplication $\Delta$ satisfy the bialgebra condition, and thus $\Hall$ is a bialgebra object in $\Vect^K$.
\end{lemma}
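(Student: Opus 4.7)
The plan is to verify the bialgebra condition on basis elements by computing both sides of the compatibility diagram and recognizing Green's formula (Proposition \ref{GF}) as precisely the statement that the two expressions agree.

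First, I would fix basis elements $[M]\ten[N]\in\Hall\ten\Hall$ and compute the composite $\Delta\circ m$ applied to $[M]\ten[N]$. Using Lemmas \ref{mult} and \ref{comult}, this gives
\[
\Delta(m([M]\ten[N]))=\sum_{[E],[X],[Y]}\frac{P^E_{MN}\,P^E_{XY}}{\aut(M)\aut(N)\aut(E)}\,[Y]\ten[X].
\]
Next I would compute the other composite $(m\ten m)\circ(1\ten B\ten 1)\circ(\Delta\ten\Delta)$ on $[M]\ten[N]$, one layer at a time. Applying $\Delta\ten\Delta$ introduces summation indices $[A],[B],[C],[D]$ with coefficient $P^M_{AB}P^N_{CD}/(\aut(M)\aut(N))$ and produces the tensor $[B]\ten[A]\ten[D]\ten[C]$.

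The crucial step is applying the braiding $B$ to the middle factors $[A]\ten[D]$. By Theorem \ref{BMC} and Lemma \ref{mult} (which ensures $[A]$ and $[D]$ live in graded pieces indexed by their images in $K$), this braiding introduces exactly the coefficient $q^{-\langle A,D\rangle}$. Finally, applying $m\ten m$ to the pairs $[B]\ten[D]$ and $[A]\ten[C]$ yields summation over new indices $[Y]$ and $[X]$ with coefficients $P^Y_{BD}/(\aut(B)\aut(D))$ and $P^X_{AC}/(\aut(A)\aut(C))$, so that the full expression becomes
\[
\sum_{\substack{[A],[B],[C],[D]\\ [X],[Y]}}q^{-\langle A,D\rangle}\frac{P^M_{AB}P^N_{CD}P^X_{AC}P^Y_{BD}}{\aut(M)\aut(N)\aut(A)\aut(B)\aut(C)\aut(D)}\,[Y]\ten[X].
\]

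Comparing coefficients of $[Y]\ten[X]$ on both sides and canceling the common factor $1/(\aut(M)\aut(N))$, equality reduces to
\[
\sum_{[E]}\frac{P^E_{MN}P^E_{XY}}{\aut(E)}=\sum_{[A],[B],[C],[D]}q^{-\langle A,D\rangle}\frac{P^M_{AB}P^N_{CD}P^X_{AC}P^Y_{BD}}{\aut(A)\aut(B)\aut(C)\aut(D)},
\]
which is exactly Green's formula (Proposition \ref{GF}). The main conceptual obstacle is really bookkeeping: one must be careful with the order of tensor factors in the chosen (opposite) comultiplication, and verify that it is the outer strands $[A]$ and $[D]$ (not the inner $[B]$ and $[C]$) which cross under the braiding, so that the $q^{-\langle A,D\rangle}$ coefficient produced by $B$ matches the one appearing in Green's formula. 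Once the strands are matched correctly, Green's formula does all the work, and since the compatibility holds on a generating set of basis elements it extends by linearity to all of $\Hall\ten\Hall$, completing the proof.
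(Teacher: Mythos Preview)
Your proposal is correct and follows essentially the same route as the paper: compute $\Delta\circ m$ and $(m\ten m)\circ(1\ten B\ten 1)\circ(\Delta\ten\Delta)$ on basis elements, track the tensor ordering so that the braided swap of the middle factors $[A]\ten[D]$ produces the coefficient $q^{-\langle A,D\rangle}$, and then observe that equality of the two sides is exactly Green's formula. The only cosmetic difference is that the paper packages the braided step as the extension of multiplication to $\Hall\ten\Hall$ via $([B]\ten[A])\cdot([D]\ten[C])=q^{-\langle A,D\rangle}[B]\cdot[D]\ten[A]\cdot[C]$, which is just another name for the composite you wrote out.
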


\begin{proof}
All the hard work for this proof was done in proving Green's Formula. We now just need to check that Green's Formula gives us the bialgebra compatibility. First we will multiply two objects, then comultiply the result to get:
\[\begin{array}{rl}
\Delta([M]\cdot[N]) & = \displaystyle{\sum_{[E]} \frac{P^E_{MN}}{\aut(M)\aut(N)}\Delta([E])} \\
                    & = \displaystyle{\sum_{[X],[Y]}\sum_{[E]} \frac{P^E_{MN}P^E_{XY}}{\aut(M)\aut(N)\aut(E)}[Y]\ten [X]}\\
\end{array}
\]
On the other hand, if we first comultiply each object, then multiply the resulting tensor products we have:
\[\Delta([M])\cdot \Delta([N]) = \sum_{[A],[B],[C],[D]} \frac{P^M_{AB}P^N_{CD}}{\aut(M)\aut(N)} ([B]\ten[A])\cdot ([D]\ten [C])\]
To continue, we need to remember the in our category $\Vect^K$ the braiding is non-trivial. This means that if we want to extend the multiplication on $\mathcal{H}$ to $\mathcal{H}\ten \mathcal{H}$ we must include the braiding coefficient. Specifically, we get the formula:
\[([B]\ten [A])\cdot ([D]\ten [C]) = q^{-\langle A,D\rangle} [B]\cdot[D]\ten [A]\cdot[C]\]
When substituted above, this yields:
\[\begin{array}{c}
\displaystyle{\sum_{[A],[B],[C],[D]} \frac{P^M_{AB}P^N_{CD}}{\aut(M)\aut(N)} ([B]\ten[A])\cdot ([D]\ten [C])}\\
 = \displaystyle{\sum_{[A],[B],[C],[D]} q^{-\langle A,D\rangle}\frac{P^M_{AB}P^N_{CD}}{\aut(M)\aut(N)} [B]\cdot[D]\ten [A]\cdot[C]}\\
 = \displaystyle{\sum_{[X],[Y]}\sum_{[A],[B],[C],[D]}\frac{q^{-\langle A,D\rangle} P^{M}_{AB}P^{N}_{CD}P^{X}_{AC}P^{Y}_{BD}}{\aut(M)\aut(N)\aut(A)\aut(B)\aut(C)\aut(D)}[Y]\ten[X]}\\
\end{array}
\]
Thus, Green's formula give the equality of the two sides.
\end{proof}

For completeness, we will also define an antipode for this bialgebra object to make it a Hopf object. This map is also a morphism in $\Vect^K$ since it clearly preserves the grading.
\begin{lemma}
The map $S:\Hall\to \Hall$ defined on generators by:
\[S([M])=-[M]\]
is a $K$-grade preserving linear operator, and is an antipode for $\Hall$. Thus $\Hall$ is a Hopf algebra object in $\Vect^K$.
\end{lemma}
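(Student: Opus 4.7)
The plan is to verify the two parts of the claim: that $S$ preserves the $K$-grading, and that it satisfies the antipode axioms $m \circ (S \otimes \id) \circ \Delta = \eta \circ \epsilon = m \circ (\id \otimes S) \circ \Delta$. Grade preservation is immediate because $-[M]$ lies in the same graded piece $\Hall_{\underline{M}}$ as $[M]$, so after linear extension $S$ is a morphism in $\Vect^K$. The real content is the antipode axioms.

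I would approach these through the connected-graded bialgebra formalism. Refining the $K$-grading to the $\N^n$-grading by dimension vector, the degree-$\underline{0}$ part of $\Hall$ is one-dimensional, spanned by the class $[0]$ of the zero representation (which is $\eta(1)$). In this connected-graded setting the standard argument applies: if an antipode exists it is unique, and it can be constructed recursively on basis elements $[E]$ by solving the antipode equation for $S([E])$ in terms of $S$ evaluated on strictly smaller grades. So the plan is to set up this recursion, then check that on the algebra generators, namely the simple representations $[S_i]$, it reproduces the stated formula $S([S_i]) = -[S_i]$. This is a short check: the only short exact sequences $0 \to N \to S_i \to M \to 0$ are the two trivial ones, so $\Delta([S_i]) = [S_i] \otimes [0] + [0] \otimes [S_i]$, i.e.\ $[S_i]$ is primitive, and the antipode axiom reduces to $S([S_i]) + [S_i] = 0$.

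The main obstacle, and the point that genuinely needs the category $\Vect^K$ rather than $\Vect$, is that the extension of $S$ from generators to all of $\Hall$ lives in the braided setting: $S$ must be a \emph{braided} anti-algebra homomorphism, so on products it picks up braiding factors, schematically $S([M]\cdot[N]) = q^{-\langle \underline{M},\underline{N}\rangle}\,S([N])\cdot S([M])$. The work is to check that this braided extension is consistent with the recursive definition and with the bialgebra identity that was established using Green's formula; since the same braiding coefficient $q^{-\langle \cdot,\cdot\rangle}$ appears in both places, consistency will reduce to bilinearity of $\langle\cdot,\cdot\rangle$, exactly as in the hexagon verification of Theorem~\ref{BMC}. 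Once this is in hand, the second antipode axiom $m\circ(\id \otimes S)\circ \Delta = \eta\circ\epsilon$ follows by the symmetric version of the same induction on total dimension.
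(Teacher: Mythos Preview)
The paper does not actually prove this lemma: the only justification offered is the single sentence before the statement, ``This map is also a morphism in $\Vect^K$ since it clearly preserves the grading,'' which addresses only the grading claim and says nothing about the antipode axioms. So your proposal is already substantially more than the paper supplies.

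Your outline is correct, and in particular your reading of ``generators'' is the right one. The formula $S([M])=-[M]$ cannot be meant on the full basis $\{[M]\}$, since $S=-\id$ fails the antipode condition as soon as $[M]$ is not primitive (e.g.\ already for $M=\F_q^2$ in type $A_1$). Interpreting ``generators'' as the algebra generators $[S_i]$, which are primitive, and then invoking the connected $\N^n$-graded structure (degree-zero part spanned by $[0]$) to produce the antipode recursively, is exactly the standard and correct route. That the $[S_i]$ generate $\Hall$ is Ringel's theorem, so the value of $S$ on them determines $S$ globally.

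One small simplification: you describe checking that the braided anti-homomorphism extension is consistent with the recursive definition as ``the work''. In fact this is automatic once the antipode exists. In any braided Hopf algebra the antipode is a braided anti-algebra map (this is in Majid's papers cited in the bibliography), so after you have existence from the connected-graded argument, the identity $S(x\cdot y)=m\bigl(B_{\Hall,\Hall}(S(x)\otimes S(y))\bigr)$ comes for free and need not be verified separately. Your proposal is therefore a complete plan; the paper's treatment, by contrast, leaves the antipode claim as an unproved assertion.
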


It is possible to generalize these results to other abelian categories, provided they obey the same finiteness properties as $\Rep(Q)$.

\begin{theorem}
Let $\mathcal{A}$ be an abelian, $k$-linear, hereditary category, where $k=\mathbb{F}_q$. Let $K=K_0(\mathcal{A})$ be its Grothendieck group, and suppose $\dim\Ext^i(M,N)$ is finite for all objects $M,N \in\mathcal{A}$. If the sum 
\[\sum_{[M],[N]} 
\frac{P_{MN}^E}{ \aut(E)}\, [N]\ten [M]\]
is finite for all objects $E\in \mathcal{A}$, then the Hall algebra $\mathcal{H}(\mathcal{A})$ is a Hopf object in $\Vect^K$.
\end{theorem}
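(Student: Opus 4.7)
The plan is to reduce to the lemmas already established for $\Rep(Q)$, checking that each depends only on the abstract hypotheses of the theorem and not on any Dynkin-specific feature. Since $\mathcal{A}$ is hereditary it has finite homological dimension $d=1$, so Theorem \ref{BMC} directly produces the braided monoidal category $\Vect^K$ with the braiding $B_{V,W}(v\ten w) = q^{-\langle \u{n},\u{m}\rangle}\, w\ten v$. The underlying vector space $\mathcal{H}(\mathcal{A}) = \R[\u{X}]$ is $K$-graded by $[M] \mapsto \u{m}$, and the multiplication, comultiplication, and antipode are defined by the same formulas as in Section \ref{hall}.

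First I would verify that these structure maps are well-defined morphisms in $\Vect^K$. Finiteness of the multiplication sum $\sum_{[E]} P^E_{MN}/(\aut(M)\aut(N))\,[E]$ for fixed $[M], [N]$ follows from $\dim \Ext^1(M,N) < \infty$: over the finite field $k = \F_q$ this makes $\Ext^1(M,N)$ a finite set, so there are only finitely many isomorphism classes of middle terms in a short exact sequence $0 \to N \to E \to M \to 0$. The comultiplication sum is finite by hypothesis. Grade preservation of both maps is immediate from the relation $[E] = [N] + [M]$ in $K$ forced by any such short exact sequence, exactly as in Lemmas \ref{mult} and \ref{comult}. Associativity and coassociativity transfer from the $\Rep(Q)$ case without modification, since Ringel's counting arguments are formal properties of any abelian category in which the relevant sums make sense.

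The decisive step is Green's formula (Proposition \ref{GF}). A careful reading of the proofs in \cite{RingelGreen, Hubery, Schiffman} shows that they use only the axioms of a hereditary abelian $k$-linear category together with the finiteness of $\dim \Ext^i(M,N)$, and the factor $q^{-\langle A, D\rangle}$ arises intrinsically from the Euler form on $K_0(\mathcal{A})$; no Dynkin-specific input enters. Once Green's formula is in hand in this generality, the bialgebra compatibility is a line-by-line repetition of the argument in the preceding section, with the braiding of $\Vect^K$ supplying exactly the twist $q^{-\langle A,D\rangle}$ that appears when rewriting $([B]\ten[A]) \cdot ([D]\ten[C])$. The antipode $S([M]) = -[M]$ is manifestly grade preserving, and the antipode axioms hold on generators by the same elementary verification as in the preceding lemma. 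The main obstacle is thus the audit of Green's formula: one must check that Ringel's homological computation genuinely uses nothing beyond hereditariness and $\Ext$-finiteness, so that all inputs to the $\Rep(Q)$ proof of Theorem \ref{HO} remain available in this broader setting.
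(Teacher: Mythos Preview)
Your proposal is correct and follows essentially the same approach as the paper: the paper's proof consists of the two-sentence observation that the hypotheses listed are exactly what is needed to run the proofs of Theorems \ref{BMC} and \ref{HO} in this generality, singling out hereditariness as the requirement for Green's formula. Your write-up is simply a more detailed audit of that same reduction.
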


\begin{proof}
Examining the proof of Theorems \ref{BMC} and \ref{HO}, we see these are the conditions that we need to generalize the result from the case $\mathcal{A} = \Rep(Q)$ to other abelian categories. Specifically, we need hereditary to prove Green's Theorem.
\end{proof}

\end{document}